\newcommand{\NN}{\mathbb{N}}
\newcommand{\OO}{\mathcal{O}}
\newcommand{\PP}{\mathbb{P}}
\newcommand{\RR}{\mathbb{R}}
\DeclareMathOperator{\Bs}{Bs}
\DeclareMathOperator{\codim}{codim}
\DeclareMathOperator{\Pic}{Pic}
\theoremstyle{plain}
\newtheorem{theorem}{Theorem}
\newtheorem{lemma}[theorem]{Lemma}
\newtheorem{corollary}[theorem]{Corollary}
\newtheorem{proposition}[theorem]{Proposition}
\begin{document}

\mbox{}
\vspace{-1.1ex}
\title[Fano varieties with finitely generated Okounkov semigroups]{Fano varieties with finitely generated semigroups in the Okounkov body construction}
\author{Shin-Yao Jow}
\address{Department of Mathematics,
National Tsing Hua University,
Taiwan}
\email{\texttt{syjow@math.nthu.edu.tw}}
\date{}

\begin{abstract}
The Okounkov body is a construction which, to an effective divisor $D$ on an $n$-dimensional algebraic variety $X$, associates a convex body $\Delta(D)$ in the $n$-dimensional Euclidean space $\RR^n$. It may be seen as a generalization of the moment polytope of an ample divisor on a toric variety, and it encodes rich numerical information about the divisor $D$. When constructing the Okounkov body, an intermediate product is a lattice semigroup $\Gamma(D)\subset \NN^{n+1}$, which we will call the Okounkov semigroup. Recently it was discovered that finite generation of the Okounkov semigroup has interesting geometric implication for $X$ regarding toric degenerations and integrable systems, however the finite generation condition is difficult to establish except for some special varieties $X$. In this article, we show that smooth projective Fano varieties of coindex${}\le 2$ have finitely generated Okounkov semigroups, providing the first family of nontrivial higher dimensional examples that are not coming from representation theory. Our result also gives a partial answer to a question of Anderson, K\"uronya, and Lozovanu.
\end{abstract}

\keywords{Okounkov body, finitely generated lattice semigroup, Fano variety, del Pezzo variety, toric degeneration, integrable system}
\subjclass[2010]{14C20, 14J45}

\maketitle

\section{Introduction}

A few years ago, Kaveh-Khovanskii \cite{KK} and Lazarsfeld-Musta\c{t}\v{a} \cite{LM} independently discovered very similar constructions which, to an effective (Cartier) divisor $D$ (or more generally a graded linear system) on an $n$-dimensional algebraic variety $X$, associates a convex body $\Delta(D)$ in the $n$-dimensional Euclidean space $\RR^n$. The convex body $\Delta(D)$, called the Okounkov body in \cite{LM}, may be seen as a generalization of the moment polytope of an ample divisor on a toric variety, and it encodes rich numerical information about the divisor $D$ \cite{J}. The construction of $\Delta(D)$ in \cite{LM} can be outlined in three steps as follows.
\begin{description}
\item[Step~1] Fix a flag of subvarieties $Y_\bullet$ on $X$
\[
 Y_\bullet \colon X=Y_0\supset Y_1 \supset Y_2 \supset \cdots \supset Y_{n-1} \supset Y_n
 =\{\text{pt}\}  \]
such that each $Y_i$ is smooth at the point $Y_n$. For any effective divisor $E$ on $X$, the flag determines a valuation-like function $\nu(E)=\bigl(\nu_1(E),\ldots,\nu_n(E)\bigr)\in \NN^n$ in the following manner: $\nu_1(E)$ is the vanishing order of $E$ along $Y_1$, $\nu_2(E)$ is the vanishing order along $Y_2$ of $E-\nu_1(E)Y_1$ restricted to $Y_1$, and so on.
\item[Step~2] Let $\Gamma(D)=\Gamma_{Y_\bullet}(D)=\{(\nu(E),m)\mid E\in |mD|, m\in \NN\}$. This is a sub-semigroup of $\NN^n\times\NN$, which we will call the \emph{Okounkov semigroup} of $D$ with respect to $Y_\bullet$.
\item[Step~3] The Okounkov body of $D$ with respect to $Y_\bullet$ is \[
  \Delta(D)=\Delta_{Y_\bullet}(D)=\overline{\mathrm{cone}(\Gamma(D))} \cap (\RR^n\times \{1\}), \]
  where $\overline{\mathrm{cone}(\Gamma(D))}\subset \RR^n\times \RR$ is the closed convex cone spanned by $\Gamma(D)$.
\end{description}

A similar construction can be applied to any graded linear system $V_\bullet$ on $X$. A graded linear system $V_\bullet$ on $X$ associated with a divisor~$D$ is a collection of linear systems $\{V_m\subseteq |mD| \bigm| m\in\NN\}$ such that $V_i+V_j\subseteq V_{i+j}$ for all $i,j\in\NN$ (where the sum denotes the sum of divisors in $V_i$ and $V_j$). The condition $V_i+V_j\subseteq V_{i+j}$ and the valuation-like property of $\nu$ imply that the Okounkov semigroup $\Gamma(V_\bullet)$ can still be defined by \[ \Gamma(V_\bullet)=\{(\nu(E),m)\mid E\in V_m, m\in \NN\}, \]
which in turn gives the Okounkov body $\Delta(V_\bullet)=\overline{\mathrm{cone}(\Gamma(V_\bullet))} \cap (\RR^n\times \{1\})$.

Although the Okounkov body may not be rational polyhedral in general, it is clear from the definition that if the Okounkov semigroup is finitely generated, then the Okounkov body is rational polyhedral. Anderson \cite{A} showed that if $D$ is a very ample divisor on a projective variety~$X$, and if the graded linear system $V_\bullet$ given by \[
 V_m=m|D|=\{D_1+\cdots+D_m\mid D_1,\ldots,D_m\in |D|\} \]
has finitely generated Okounkov semigroup, then $X$ admits a flat degeneration to a (not necessarily normal) toric variety whose normalization is the toric variety associated to the rational polytope $\Delta(V_\bullet)$. Furthermore, Harada and Kaveh \cite{HK} showed that this toric degeneration can be used to construct a complete integrable system on $X$ with moment polytope $\Delta(V_\bullet)$. Unfortunately, finite generation of the Okounkov semigroup is a condition which is difficult to establish except for a few special cases, and the examples given in \cite{A} and \cite{HK} are either representation-theoretic or of dimension less than three. The aim of this article is to provide another family of higher-dimensional examples which satisfy that elusive yet important condition:

\begin{theorem}  \label{t:main}
Let $X$ be a smooth complex projective Fano variety of dimension~$n$ and index~$r$, and let $H$ be an ample divisor on $X$ such that $-K_X=rH$. Let $c$ be a positive integer such that $D=cH$ is very ample, and let $V_\bullet$ be the graded linear system given by $V_m=m|D|$ for all $m\in\NN$. If\/ $r\ge n-1$, then there exists a flag \[
 Y_\bullet \colon X=Y_0\supset Y_1 \supset Y_2 \supset \cdots \supset Y_{n-1} \supset Y_n
 =\{\text{pt}\}  \]
of irreducible smooth subvarieties of $X$ such that
\begin{enumerate}[\upshape (i)]
 \item The Okounkov semigroups $\Gamma_{Y_\bullet}(V_\bullet)$ and $\Gamma_{Y_\bullet}(D)$ are both finitely generated.
 \item The Okounkov bodies $\Delta_{Y_\bullet}(V_\bullet)$ and $\Delta_{Y_\bullet}(D)$ are both equal to the simplex in $\RR^n$ with vertices $0,ce_1,\ldots,ce_{n-1}$ and $cde_n$, where $d=H^n$ and $\{e_1,\ldots,e_n\}$ is the standard basis for~$\RR^n$.
\end{enumerate}
\end{theorem}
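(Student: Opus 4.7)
The plan is to take the flag $Y_1,\ldots,Y_{n-1}$ to be a generic complete intersection of $n-1$ members of $|H|$, smooth and irreducible by Bertini. By adjunction $K_{Y_{n-1}}=(n-1-r)H|_{Y_{n-1}}$, so the smooth curve $Y_{n-1}$ is $\PP^1$ when $r\in\{n,n+1\}$ and an elliptic curve when $r=n-1$. I take $Y_n$ to be any point of $Y_{n-1}$ in the rational cases, and in the elliptic case I choose $Y_n$ to satisfy the linear equivalence $dY_n\sim H|_{Y_{n-1}}$ in $\Pic(Y_{n-1})$; such a point exists because the morphism $Y_{n-1}\to\Pic^0(Y_{n-1})$, $p\mapsto [dp-H|_{Y_{n-1}}]$, is a translate of the degree-$d^2$ multiplication-by-$d$ isogeny of the elliptic curve, hence surjective.

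The heart of the argument is the following claim: for every $m\geq 1$ and every lattice point $v=(v_1,\ldots,v_n)\in\NN^n$ satisfying $d(v_1+\cdots+v_{n-1})+v_n\leq mcd$, there is a section $s\in H^0(X,mcH)$ with $\nu(s)=v$. The matching upper bound $\nu(s)\in m\Delta$ for every nonzero $s$ is immediate from the definition of $\nu$: stripping off the vanishings along $Y_1,\ldots,Y_{n-1}$ in turn leaves a section of $(mc-\sum_{i<n}v_i)H|_{Y_{n-1}}$ on a curve of $H$-degree $d$, forcing $v_n\leq (mc-\sum_{i<n}v_i)d$. Combining the two inclusions yields $\Gamma_{Y_\bullet}(D)=\{(v,m):v\in m\Delta\cap\NN^n\}$, the full lattice semigroup of the rational polyhedral cone over the claimed simplex; this is finitely generated by Gordan's lemma, and $\Delta_{Y_\bullet}(D)$ is exactly that simplex.

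I will prove the claim by a base case on the curve followed by an inductive lift. On $Y_{n-1}$ I need an effective divisor in the class $(mc-\sum_{i<n}v_i)H|_{Y_{n-1}}$ with multiplicity exactly $v_n$ at $Y_n$. When $v_n<(mc-\sum_{i<n}v_i)d$ the residual line bundle has positive degree and Riemann--Roch produces it; when $v_n=(mc-\sum_{i<n}v_i)d$ the residual bundle has degree zero, and the special choice of $Y_n$ makes $(mc-\sum_{i<n}v_i)(dY_n-H|_{Y_{n-1}})\sim 0$ precisely, so the bundle is trivial and effective. For the inductive lift, given $s_{j+1}\in H^0(Y_{j+1},(mc-v_1-\cdots-v_{j+1})H|_{Y_{j+1}})$ with the prescribed $\nu$-vector, I first pull back to $\tilde s_j\in H^0(Y_j,(mc-v_1-\cdots-v_{j+1})H|_{Y_j})$ under the restriction map, then multiply by the $v_{j+1}$-th power of the defining section of $Y_{j+1}\subset Y_j$ to form $s_j$. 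The needed surjectivity holds because $H^1(Y_j,(mc-v_1-\cdots-v_{j+1}-1)H|_{Y_j})=0$ by Kawamata--Viehweg on the Fano variety $Y_j$ of index $r-j\geq 1$.

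The main obstacle lies on the boundary of the simplex in the del Pezzo case $r=n-1$. The existence of the base-case divisor at the extremal value $v_n=(mc-\sum_{i<n}v_i)d$ on the elliptic curve $Y_{n-1}$ is what forces the nontrivial linear equivalence $dY_n\sim H|_{Y_{n-1}}$. At the same time, the lift from $Y_{n-1}$ to the surface $Y_{n-2}$ sits on the borderline $r=j+1$ of Kawamata--Viehweg where the twist can degenerate to the trivial bundle, but there the section to be lifted is a constant and $H^0(Y_{n-2},\OO_{Y_{n-2}})=\CC$ maps isomorphically to $H^0(Y_{n-1},\OO_{Y_{n-1}})=\CC$, so the lift is tautological. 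Once the claim is proved, the statements about $V_\bullet=m|D|$ follow at once: the $m=1$ case produces divisors in $|D|$ realizing every vertex of the simplex, so $\Delta_{Y_\bullet}(V_\bullet)$ equals the simplex; and since $\nu$ is additive on sums of divisors, $\Gamma_{Y_\bullet}(V_\bullet)$ is generated by the finite set $\{(\nu(E),1):E\in|D|\}$ and is therefore finitely generated.
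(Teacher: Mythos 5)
Your flag is the paper's flag, and your choice of $Y_n$ with $dY_n\sim H|_{Y_{n-1}}$ on the elliptic curve is exactly the paper's Lemma~1; the upper bound $\nu(s)\in mc\Delta$ is also fine. The fatal problem is your central claim that \emph{every} lattice point of $mc\Delta$ is realized, i.e.\ that $\Gamma_{Y_\bullet}(D)$ is the full saturated semigroup of lattice points in the cone over the simplex. This is false precisely in the hard case $r=n-1$. On the elliptic curve $Y_{n-1}$, put $L=kH|_{Y_{n-1}}$ with $N=kd\ge 2$; since $L\sim NY_n$ one has $h^0(L-jY_n)=N-j$ for $j\le N-1$ but $h^0(L-NY_n)=h^0(\OO_{Y_{n-1}})=1$, so $h^0(L-(N-1)Y_n)=h^0(L-NY_n)$ and \emph{no} section vanishes to order exactly $N-1$ at $Y_n$: the unique effective divisor in $|L-(N-1)Y_n|$ is $Y_n$ itself, so your ``Riemann--Roch produces it'' step silently bumps the multiplicity from $N-1$ to $N$ whenever the residual degree is $1$. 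Concretely, for a cubic surface with $Y_1$ a smooth hyperplane section and $Y_2$ a flex of the plane cubic $Y_1$, the valuation vectors of $|H|$ are $(0,0),(0,1),(0,3),(1,0)$: the simplex has five lattice points but $h^0(H)=4$, and $(0,2)$ is missed. Hence the semigroup is not saturated (this is exactly why the Corollary speaks of a \emph{not necessarily normal} toric degeneration), Gordan's lemma is unavailable, and part~(i) is the genuinely nontrivial assertion. The paper's way around this is \cite[Proposition~1]{A}: once the vertices $0,ce_1,\ldots,ce_{n-1},cde_n$ are known to be valuation vectors of members of $V_1$ (which your construction does provide), finite generation of the generally non-saturated semigroup follows from finite generation of the graded algebra. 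You need that result, or a direct determination of the non-saturated semigroup together with a proof of its finite generation; your argument supplies neither.

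Two secondary points. Your last sentence derives finite generation of $\Gamma_{Y_\bullet}(V_\bullet)$ from additivity of $\nu$ on sums of divisors, but that only accounts for the decomposable elements $D_1+\cdots+D_m$; the graded linear system relevant to Anderson's degeneration theorem is the \emph{linear span} of such products inside $|mD|$ (the degree-$m$ piece of the subalgebra generated by $H^0(X,\OO_X(D))$), and $\nu$ of a linear combination of products is not controlled by the $\nu(D_i)$ --- otherwise every projectively embedded variety would trivially satisfy Anderson's hypothesis and the question posed in \cite{AKL} would be vacuous. Also, when $H^n=1$ the system $|H|$ has a base point, so ``smooth and irreducible by Bertini'' for the complete intersection flag needs the supplementary facts from \cite[Propositions~3.2.3 and~3.2.4]{PS} that the paper cites. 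Your treatment of the cases $r\ge n$ (where $Y_{n-1}$ is rational and the semigroup really is saturated) and your inductive lifting via vanishing of $H^1$ are fine.
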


As discussed above, this theorem implies the following
\begin{corollary}
 A smooth complex projective Fano variety $X$ of coindex${}\le 2$ admits a flat degeneration to a (not necessarily normal) toric variety whose normalization is a weighted projective space, and there exists a complete integrable system on $X$ whose moment polytope is a simplex.
\end{corollary}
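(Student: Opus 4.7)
The plan is to deduce the corollary directly from Theorem~\ref{t:main} together with the theorems of Anderson~\cite{A} and Harada--Kaveh~\cite{HK} quoted in the introduction. Recall that the coindex of a smooth Fano variety $X$ of dimension $n$ and index $r$ is defined as $n+1-r$, so the hypothesis coindex${}\le 2$ translates exactly into $r\ge n-1$, which is the numerical hypothesis of the theorem.

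First I would pick the ample generator $H$ with $-K_X=rH$ and choose a positive integer $c$ such that $D=cH$ is very ample; this is possible because $H$ is ample on a projective variety. Setting $V_m=m|D|$, Theorem~\ref{t:main} then supplies a flag $Y_\bullet$ of smooth irreducible subvarieties for which the Okounkov semigroup $\Gamma_{Y_\bullet}(V_\bullet)$ is finitely generated and $\Delta_{Y_\bullet}(V_\bullet)$ is the rational simplex $S\subset\RR^n$ with vertices $0,\,ce_1,\ldots,ce_{n-1},\,cde_n$.

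Next I would apply Anderson's theorem: since $D$ is very ample and $\Gamma_{Y_\bullet}(V_\bullet)$ is finitely generated, $X$ admits a flat degeneration to a (possibly non-normal) toric variety whose normalization is the normal toric variety $X_S$ associated to the rational polytope $S$. Because $S$ is an $n$-simplex, its normal fan has exactly $n+1$ rays, and a standard fact in toric geometry identifies any complete simplicial toric variety with $n+1$ rays with a weighted projective space $\PP(a_0,\ldots,a_n)$, the weights being determined (up to common scale) by the unique linear relation among the primitive ray generators. This yields the first half of the corollary.

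For the second half I would invoke the Harada--Kaveh construction, which upgrades the toric degeneration above to a completely integrable system on $X$ whose moment polytope equals $\Delta_{Y_\bullet}(V_\bullet)=S$, a simplex. The only genuinely nontrivial content is Theorem~\ref{t:main} itself; within the corollary the single bookkeeping step that needs explicit verification is the identification of the toric variety of a rational simplex with a weighted projective space, and this is classical.
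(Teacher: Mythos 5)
Your proposal is correct and is essentially the paper's own argument: the paper gives no separate proof of the corollary beyond the phrase ``As discussed above,'' and the intended deduction is exactly your chain coindex${}\le 2\Leftrightarrow r\ge n-1$, then Theorem~\ref{t:main}, then Anderson's toric degeneration theorem and the Harada--Kaveh integrable system construction. One small caution: the ``standard fact'' you cite is only true up to a finite abelian quotient (a complete simplicial toric variety with $n+1$ rays is in general a \emph{fake} weighted projective space), but here the normal fan of the simplex with vertices $0,ce_1,\ldots,ce_{n-1},cde_n$ has primitive ray generators $e_1,\ldots,e_n$ and $-(d,\ldots,d,1)$, which generate the lattice $\mathbb{Z}^n$, so the normalization really is the honest weighted projective space $\PP(d,\ldots,d,1,1)$.
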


It is well-known that the dimension~$n$ and the index~$r$ of a Fano manifold $X$ satisfy $r\le n+1$ \cite[Corollary~2.1.13]{PS}, and that if $r=n+1$ then $X$ is the projective space, and if $r=n$ then $X$ is a quadric \cite[Theorem~3.1.14]{PS}. If $r=n-1$ then $X$ is often called a del Pezzo variety, which has also been completely classified \cite[Theorem~3.3.1]{PS}, but our proof of Theorem~\ref{t:main} is not by individually verifying each case on the classification list.

Our proof of Theorem~\ref{t:main} actually shows that $X$ satisfies a geometric property which is stronger than finite generation of the Okounkov semigroup. Roughly speaking, the property is that there exist $H_1,\ldots,H_n\in |H|$ which intersect at only one point set-theoretically. See Proposition~\ref{p:flag} for a more precise statement. A slight variant of this property and its implication for finite generation of the Okounkov semigroup were first noted in \cite{AKL}, and finding varieties with this property was posed as an open question at the end of \emph{loc. cit.} Thus our result also provides del Pezzo varieties as the first nontrivial answer to that question, as far as we know.

\section{Proof of Theorem~\ref{t:main}}

\begin{lemma}  \label{l:EC}
Every complete linear system on a smooth elliptic curve contains a member whose support is a single point.
\end{lemma}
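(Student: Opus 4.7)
The plan is to exploit the group law on $E$ together with surjectivity of the multiplication-by-$d$ endomorphism. Let $|D|$ be a complete linear system on the smooth elliptic curve $E$ and set $d=\deg D$. The cases $d\le 0$ are trivial (the system is either empty, or $d=0$ and $|D|=\{0\}$), so assume $d\ge 1$. Fix a point $O\in E$ to give $E$ the structure of an abelian variety with identity $O$, and use the Abel--Jacobi isomorphism $P\mapsto[\OO_E(P-O)]$ to identify $\Pic^0(E)\cong E$.

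Finding $P\in E$ with $dP\sim D$ is equivalent, via this identification, to the requirement
\[
 [\OO_E(D-d\cdot O)]=d\cdot[\OO_E(P-O)]\quad\text{in }\Pic^0(E)\cong E,
\]
that is, the degree-zero class $[\OO_E(D-d\cdot O)]$ must lie in the image of the multiplication-by-$d$ endomorphism $[d]\colon E\to E$.

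The key (and essentially only) step is that $[d]$ is surjective on complex points for every $d\ge 1$: analytically, under $E\cong\mathbb{C}/\Lambda$ the endomorphism $[d]$ is induced by $z\mapsto dz$ on $\mathbb{C}$, which is clearly surjective; alternatively, $[d]$ is a nonconstant morphism between smooth projective curves and is therefore surjective. It follows that a point $P$ with $d(P-O)\sim D-d\cdot O$ exists, producing the desired member $dP\in|D|$ with support the single point $P$. I do not anticipate any real obstacle in this argument; the content is entirely the surjectivity of $[d]$, and the rest is bookkeeping in the Picard group.
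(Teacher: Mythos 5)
Your argument is correct and is essentially the same as the paper's: the paper considers the map $P\mapsto dP$ from the curve to $\Pic^d$ and observes it is finite and surjective, which after the Abel--Jacobi identification $\Pic^0(E)\cong E$ is exactly your surjectivity of the multiplication-by-$d$ endomorphism. The only difference is cosmetic (you translate everything to degree zero before invoking surjectivity), so there is nothing to add.
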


\begin{proof}
Let $C$ be a smooth elliptic curve, and let $\Pic^d(C)$ be the Picard variety parametrizing line bundles of degree~$d$ on $C$, where $d$ is any positive integer. The morphism $C\to \Pic^d(C)$ given by $P\mapsto dP$ is finite and surjective, hence every effective divisor of degree~$d$ on $C$ is linearly equivalent to $dP$ for some $P\in C$.
\end{proof}

\begin{proposition}  \label{p:flag}
Let $X$ be a smooth complex projective Fano variety of dimension~$n$ and index~$r$, and let $H$ be an ample divisor on $X$ such that $-K_X=rH$. If\/ $r\ge n-1$, then there exists a flag
\[
 Y_\bullet \colon X=Y_0\supset Y_1 \supset Y_2 \supset \cdots \supset Y_{n-1} \supset Y_n
 =\{\text{pt}\}  \]
of irreducible smooth subvarieties of $X$, where $\codim_X(Y_i)=i$, which satisfies both of the following conditions:
 \begin{enumerate}[\upshape (i)]
  \item There exist $n-1$ general members $H_1,\ldots,H_{n-1}\in |H|$ of the linear system $|H|$ such that $Y_i$ is the scheme-theoretic intersection of $H_1,\ldots,H_i$ for all $i\le n-1$.
  \item There exists a member of the linear system $|H|$ whose set-theoretic intersection with $Y_{n-1}$ is the single point $Y_n$.
 \end{enumerate}
\end{proposition}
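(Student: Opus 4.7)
The plan is to construct $Y_1, \ldots, Y_{n-1}$ as transverse complete intersections of general members of $|H|$, use adjunction to control the curve $Y_{n-1}$, and then lift a suitably chosen single-point divisor from $Y_{n-1}$ back to $X$ by a cohomological argument.

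I first set up the iterated Bertini construction. Granted that $|H|$ is base-point free on $X$ (a fact I would invoke from the classification of Fano manifolds of coindex $\leq 2$), I pick a general $H_1 \in |H|$ and set $Y_1 = H_1$; Bertini then guarantees that $Y_1$ is smooth and irreducible. Inductively, suppose $Y_{i-1}$ is smooth and irreducible and that the linear system cut out on $Y_{i-1}$ by $|H|$ is still base-point free. Then for general $H_i \in |H|$ the intersection $Y_i := Y_{i-1} \cap H_i$ is smooth and irreducible of codimension $i$ by Bertini. The base-point freeness of the restricted system follows from the surjectivity of the restriction map $H^0(X, \mathcal{O}_X(H)) \to H^0(Y_{i-1}, \mathcal{O}_{Y_{i-1}}(H))$, which I establish next. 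This handles condition~(i).

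Adjunction gives $K_{Y_i} = (K_X + iH)|_{Y_i} = (i - r)H|_{Y_i}$, so for every $i \leq n - 2 \leq r - 1$ the subvariety $Y_i$ is Fano and Kodaira vanishing yields $H^1(Y_i, \mathcal{O}_{Y_i}) = 0$. The smooth curve $Y_{n-1}$ has $-K_{Y_{n-1}} = (r - n + 1)H|_{Y_{n-1}}$ of nonnegative degree, so its genus is at most one: it is $\mathbb{P}^1$ when $r \geq n$ and a smooth elliptic curve when $r = n - 1$. In either case, the complete linear system of $H|_{Y_{n-1}}$ contains a divisor supported at a single point --- any $d \cdot [P]$ in the rational case, or by Lemma~\ref{l:EC} in the elliptic case. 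To lift such a divisor to $|H|$ on $X$, I chain the short exact sequences
\[
 0 \to \mathcal{O}_{Y_i} \to \mathcal{O}_{Y_i}(H) \to \mathcal{O}_{Y_{i+1}}(H) \to 0,
\]
valid because $Y_{i+1}$ is a divisor on $Y_i$ in the class $H|_{Y_i}$. Combined with the vanishings $H^1(Y_i, \mathcal{O}_{Y_i}) = 0$ for $i \leq n - 2$, this yields a surjection $H^0(X, \mathcal{O}_X(H)) \twoheadrightarrow H^0(Y_{n-1}, \mathcal{O}_{Y_{n-1}}(H))$. The distinguished divisor on $Y_{n-1}$ is then the restriction of some $H_n \in |H|$, and I let $Y_n$ be the single set-theoretic point of $H_n \cap Y_{n-1}$.

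The main obstacle is the elliptic case $r = n - 1$: without Lemma~\ref{l:EC}, there is no obvious reason for a complete linear system on a genus-one curve to contain a divisor supported at a single point, and it is the Abel--Jacobi surjectivity used in that lemma that makes the argument go through in the del Pezzo regime. The other ingredients --- base-point freeness of $|H|$, Bertini on the Fano slices, and Kodaira vanishing --- are essentially routine once the adjunction formula $K_{Y_i} = (i - r)H|_{Y_i}$ has been recorded.
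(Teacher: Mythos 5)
Your overall strategy --- iterated general hyperplane sections via Bertini, adjunction to identify $Y_{n-1}$ as a rational or elliptic curve, Lemma~\ref{l:EC} in the genus-one case, and a lift of the single-point divisor through the surjections $H^0(Y_i,\OO_{Y_i}(H))\twoheadrightarrow H^0(Y_{i+1},\OO_{Y_{i+1}}(H))$ coming from $H^1(Y_i,\OO_{Y_i})=0$ --- is essentially the proof given in the paper. Your Kodaira-vanishing derivation of the surjectivity of restriction is a self-contained substitute for the paper's citation of \cite[Proposition~3.2.3]{PS}, and is fine.

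There is, however, one genuine gap: the premise that $|H|$ is base-point free, which you say you would ``invoke from the classification.'' This is false for del Pezzo manifolds of degree $H^n=1$ (for instance the sextic hypersurfaces in $\PP(1,\dots,1,2,3)$): there $\Bs|H|$ is a single point, and the same defect persists for every restricted system $|H|_{Y_i}|$ in your chain, since the degree $(H|_{Y_i})^{\,n-i}=H^n=1$ is preserved. Consequently the plain Bertini theorem does not by itself give smoothness of the general member at that base point, and your inductive hypothesis that the system cut out on $Y_{i-1}$ is base-point free cannot be maintained. The paper repairs exactly this point: by \cite[Proposition~3.2.4~(i)]{PS}, when $H^n=1$ the base locus is a single point at which a general member is nevertheless smooth, and Bertini handles the complement of the base locus; irreducibility of the general member also needs a word in the presence of a base point (it follows, e.g., from connectedness of ample divisors, or from \cite[Proposition~3.2.3]{PS}, which says the general section is again a del Pezzo variety). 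With that substitute for base-point freeness the rest of your argument goes through unchanged; note that in the degree-one case the final lifting step is in fact vacuous, since $|H|_{Y_{n-1}}|$ has degree $1$ on the elliptic curve and its unique member is already a single point.
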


\begin{proof}
If $r=n+1$, then $X=\PP^n$ and $H$ is a hyperplane, so the lemma obviously holds in this case. If $r=n$, then $X$ is a quadric in $\PP^{n+1}$ and $H$ is a hyperplane section. Let $H_1,\ldots,H_{n-1}\in |H|$ be general members of the linear system $|H|$, and let $Y_i$ be the scheme-theoretic intersection of $H_1,\ldots,H_i$ for each $i\le n-1$. Then $Y_1,\ldots,Y_{n-1}$ are smooth and irreducible by Bertini's theorem. The curve $Y_{n-1}$ is a conic in this case, so the point $Y_n$ can be chosen arbitrarily on $Y_{n-1}$, and any hyperplane tangent to $Y_{n-1}$ at $Y_n$ gives a divisor which satisfies (ii).

The remaining case is $r=n-1$, that is, $X$ is a del Pezzo variety. By \cite[Proposition~3.2.4~(i)]{PS}, the base locus $\Bs|H|$ is empty if $H^n>1$, while if $H^n=1$ then $\Bs|H|$ consists of a single point at which a general member of $|H|$ is smooth. This together with Bertini's theorem imply that a general member of $|H|$ is smooth. Pick any smooth $H_1\in |H|$ and set $Y_1=H_1$. By \cite[Proposition~3.2.3]{PS}, $(Y_1,H|_{Y_1})$ is a del Pezzo variety as well, so a general member of $|H|_{Y_1}|$ is again smooth. Pick any smooth $Y_2\in |H|_{Y_1}|$. Since $H^0\bigl(X,\OO_X(H)\bigr)\to H^0\bigl(Y_1,\OO_{Y_1}(H|_{Y_1})\bigr)$ is surjective \cite[Proposition~3.2.3]{PS}, there exists an $H_2\in |H|$ such that $Y_2$ is the (scheme-theoretic) intersection of $H_1$ and $H_2$. Continue in this way to inductively select the subvarieties $Y_i$ and the divisors $H_i\in |H|$ for $i=1,2,\ldots, n-1$. Then the curve $Y_{n-1}$ is a smooth elliptic curve, and the restriction of $|H|$ on $Y_{n-1}$ is a complete linear system. Hence by Lemma~\ref{l:EC}, there exists an $H_n\in |H|$ whose set-theoretic intersection with $Y_{n-1}$ is a single point, and we set $Y_n$ to be this point.
\end{proof}

\begin{proof}[Proof of Theorem~\ref{t:main}]
Let $Y_\bullet$ be a flag of smooth subvarieties of $X$ as in Proposition~\ref{p:flag}. The proof of \cite[Proposition~4]{AKL} goes without change to show that the Okounkov body $\Delta_{Y_\bullet}(H)$ is the $n$-dimensional simplex $\Delta\subset \RR^n$ with vertices $0,e_1,\ldots,e_{n-1}$ and $de_n$. By the homogeneity property of Okounkov bodies \cite[Proposition~4.1]{LM},  \[
     \Delta_{Y_\bullet}(D)=\Delta_{Y_\bullet}(cH)=c\,\Delta. \]
Obviously $\Delta_{Y_\bullet}(V_\bullet)\subseteq \Delta_{Y_\bullet}(D)=c\,\Delta$ since $V_m\subseteq |mD|$ for all $m\in\NN$. On the other hand, by Proposition~\ref{p:flag}, the set of valuation vectors $\nu_{Y_\bullet}(|H|)$ contains every vertex of the simplex $\Delta$, hence the set of valuation vectors $\nu_{Y_\bullet}(V_1)=\nu_{Y_\bullet}(|D|)$ contains every vertex of the simplex $c\,\Delta$. This implies that $\Delta_{Y_\bullet}(V_\bullet)=c\,\Delta$, and then by \cite[Proposition~1]{A} the Okounkov semigroups $\Gamma_{Y_\bullet}(V_\bullet)$ and $\Gamma_{Y_\bullet}(D)$ are both finitely generated.
\end{proof}

\noindent\textbf{Acknowledgments.} The author gratefully acknowledges the support of MOST (Ministry of Science and Technology, Taiwan), NCTS (National Center for Theoretical Sciences), and the Kenda Foundation.


\end{document}